\providecommand{\U}[1]{\protect\rule{.1in}{.1in}}
\providecommand{\U}[1]{\protect \rule{.1in}{.1in}}
\newtheorem{theorem}{Theorem}[section]
\newtheorem{corollary}[theorem]{Corollary}
\newtheorem{lemma}[theorem]{Lemma}
\newenvironment{proof}[1][Proof]{\noindent \textbf{#1.} }{\  \rule{0.5em}{0.5em}}
\numberwithin{equation}{section}
\begin{document}

\title{A Remark on the Piatetski--Shapiro--Hua Theorem}

\author{Jinjiang Li\footnotemark[1]\,\,\,\, \, \& \,\,Min Zhang\footnotemark[2] \vspace*{-4mm} \\
$\textrm{\small Department of Mathematics, China University of Mining and Technology}^{*\,\dag}$
                    \vspace*{-4mm} \\
     \small  Beijing 100083, P. R. China  }

\footnotetext[2]{Corresponding author. \\
    \quad\,\, \textit{ E-mail addresses}: \href{mailto:jinjiang.li.math@gmail.com}{jinjiang.li.math@gmail.com} (J. Li),
     \href{mailto:min.zhang.math@gmail.com}{min.zhang.math@gmail.com} (M. Zhang).    }

\date{}
\maketitle


{\textbf{Abstract}}: In this paper, we prove that for any fixed $205/243<\gamma\leqslant1$, every sufficiently large $N$ satisfying $N\equiv 5 \pmod {24}$ can be represented as five squares of primes with one prime in $\mathcal{P}_\gamma$, which improves the previous result of Zhang and Zhai.

{\textbf{Keywords}}: Piatetski--Shapiro prime; Waring--Goldbach problem; Exceptional set

{\textbf{MR(2010) Subject Classification}}: 11L20, 11P05, 11P32.

\section{Introduction and main result}

Let $k\geqslant1$ be a positive integer, $s\geqslant2^k+1$ and $N$ be a sufficiently large integer. Let $T(N;s,k)$ denote the number of
    solutions of the equation
    \begin{equation}\label{representation}
        N=p_1^k+p_2^k+\cdots+p_s^k.
    \end{equation}

    In 1937, Vinogradov  \cite{Vinogradov} proved the well known Goldbach--Vinogradov theorem: Every sufficiently large odd integer $N$ can be represented
    as a sum of three primes. It can be stated in a more exactly quantitative form, i.e.,
    \begin{equation*}
       T(N;3,1)=\frac{1}{2}\mathfrak{S}(N)\frac{N^2}{\log^3N}+O\bigg(\frac{N^2}{\log^A N}\bigg)
    \end{equation*}
    for any $A>4$, where $\mathfrak{S}(N)$ denotes the singular series
    \begin{equation*}
       \mathfrak{S}(N)=\prod_{p|N}\bigg(1-\frac{1}{(p-1)^2}\bigg)\prod_{p\nmid N}\bigg(1+\frac{1}{(p-1)^3}\bigg).
    \end{equation*}
    For the general case $k\geqslant2$, Hua~\cite{Hua-book} proved that there holds
    \begin{equation*}
       T(N;s,k)=\mathfrak{S}_{k,s}(N)\frac{\Gamma^s(1/k)}{\Gamma(s/k)}\frac{N^{s/k-1}}{\log^s N}+O\bigg(\frac{N^{s/k-1}\log\log N}{\log^{s+1}N}\bigg),
    \end{equation*}
    where $\mathfrak{S}_{k,s}(N)$ is the singular series related to the representation (\ref{representation}).

    Motivated by earlier work of Erd\H{o}s and Nathanson \cite{Erdos-Nathanson} on sums of squares,
some mathematicians considered the question of whether one could find thin subsets of primes which were still sufficient to obtain the  Goldbach--Vinogradov theorem. In 1986, based on probability considerations, Wirsing \cite{Wirsing} proved that there exists a subset $\mathcal{S}$ of primes with the property
\begin{equation*}
     \sum_{\substack{p\leqslant x\\p\in\mathcal{S}}}1\ll(x\log x)^{1/3},
\end{equation*}
which serves this purpose. Although Wirsing's result is best possible apart from the logarithmic factor, it does not lead to a subset of primes which is
constructive or recognizable.

 Primes of the form $[n^c]$, where $1\leqslant c<2$, are called Piatetski--Shapiro primes. Let $\gamma=1/c$, so that the set of the Piatetski--Shapiro
 primes of type
  $$\mathcal{P}_\gamma=\big\{p:p=[n^{1/\gamma}]\,\,\textrm{for some $n\in\mathbb{N}$}\big\}$$
  is a well--known thin set of prime numbers. Piatetski--Shapiro first proved that the asymptotic formula
  \begin{equation}\label{shapiro-theorem}
      \sum_{\substack{n\leqslant x\\ [n^{c}]=p}}1=\big(1+o(1)\big)\frac{x}{c\log x}
  \end{equation}
  holds for $1<c<12/11$. Since then, this range for $c$ has been improved by a number of authors. The best results are given by 
  Rivat and Sargos~\cite{Rivat-Sargos}, where it is proved that (\ref{shapiro-theorem}) holds for $1<c<2817/2426$.

  In 1998, Zhai~\cite{Zhai} obtained a theorem which had two interesting corollaries:
\begin{corollary}
   For any fixed $43/44<\gamma\leqslant1$, every sufficiently large $N$ satisfying $N\equiv 5 \pmod {24}$ can be represented as five squares of primes
   with each prime in $\mathcal{P}_\gamma$.
\end{corollary}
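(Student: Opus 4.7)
The plan is to apply the Hardy--Littlewood circle method in a form adapted to the Piatetski--Shapiro setting. Set $X = (N/5)^{1/2}$ and form the weighted exponential sum
$$S_\gamma(\alpha) = \sum_{X/2 < p \leqslant X}(\log p)\,\chi_\gamma(p)\, e(\alpha p^2), \qquad \chi_\gamma(n) = [-n^\gamma] - [-(n+1)^\gamma],$$
so that $\chi_\gamma$ is (essentially) the indicator of $\mathcal{P}_\gamma$ on the positive integers. The weighted representation count is $R(N) = \int_0^1 S_\gamma(\alpha)^5 e(-\alpha N)\, d\alpha$, and it suffices to show $R(N) > 0$ for $N \equiv 5 \pmod{24}$ sufficiently large. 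I would carry out the standard Farey dissection of $[0,1)$ into major arcs $\mathfrak{M}$ with denominators $q \leqslant P = N^\eta$ and minor arcs $\mathfrak{m} = [0,1) \setminus \mathfrak{M}$.

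To handle the Piatetski--Shapiro indicator, invoke Vaaler's trigonometric polynomial approximation to write
$$\chi_\gamma(n) = \gamma n^{\gamma-1} + \sum_{0 < |h| \leqslant H} a_h\, e(h n^\gamma) + (\text{small pointwise error}),$$
where $|a_h| \ll 1/|h|$ and $H$ is a parameter to be optimised. Substituting this into $S_\gamma(\alpha)$ splits the sum into a ``main'' part carrying the smooth weight $\gamma p^{\gamma-1}$ and a family of twisted sums $U_h(\alpha) = \sum_{X/2 < p \leqslant X}(\log p)\,e(\alpha p^2 + h p^\gamma)$.

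On the major arcs, partial summation removes the smooth weight $p^{\gamma-1}$, and the Siegel--Walfisz theorem together with standard singular series manipulations yields the expected main term of order $\mathfrak{S}(N)\, X^{5\gamma-2}/\log^5 X$. The congruence $N \equiv 5 \pmod{24}$ is precisely what guarantees $\mathfrak{S}(N) \gg 1$, reflecting the local obstructions modulo $8$ and modulo $3$ for sums of five squares of odd primes. The contribution of the $U_h$ terms on the major arcs is absorbed by a Cauchy--Schwarz argument combined with Hua's inequality for the fourth moment of $\sum_p (\log p)\, e(\alpha p^2)$.

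The minor arcs are the main obstacle. One needs a pointwise Weyl-type bound for $S_\gamma(\alpha)$ uniformly on $\mathfrak{m}$: apply Vaughan's identity to $\Lambda(n)\chi_\gamma(n)$ to peel off Type I and Type II sums, substitute the Vaaler expansion, and estimate the resulting hybrid bilinear sums $\sum_{m,n} e(\alpha (mn)^2 + h(mn)^\gamma)$ by the van der Corput method or a suitable exponent pair. Combining this pointwise bound with Hua's $L^2$ moment for squares of primes should deliver $\int_{\mathfrak{m}} |S_\gamma(\alpha)|^5\, d\alpha = o\bigl(X^{5\gamma-2}/\log^5 X\bigr)$, but only once $\gamma$ is large enough that the extra factor $e(h n^\gamma)$ lets the hybrid sum save enough over the trivial bound to overcome the fifth-power concentration on $\mathfrak{m}$. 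Optimising the exponent pair used in this step yields exactly the threshold $\gamma > 43/44$ of Zhai; sharpening precisely this step, under the weaker requirement of only one Piatetski--Shapiro variable among the five, is what allows the present paper to extend the range to $\gamma > 205/243$.
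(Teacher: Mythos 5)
First, note that this corollary is not proved in the paper at all: it is quoted verbatim as a known result of Zhai \cite{Zhai}, and the present paper's own argument (for its Theorem 1.5, the ``one prime in $\mathcal{P}_\gamma$'' version) deliberately avoids the circle method, instead combining a lower bound for the number of Piatetski--Shapiro primes in a dyadic interval (Rivat--Wu) with the Harman--Kumchev exceptional-set estimate for sums of four squares of primes. That elementary route cannot give the statement you are proving, since it leaves four of the five primes unrestricted; so your decision to run the full circle method with all five variables weighted by $\chi_\gamma$ is the right one, and your outline does match the general architecture of Zhai's actual proof (Farey dissection, Vaaler expansion of the Piatetski--Shapiro indicator, Type I/II decomposition via Vaughan's identity, exponent pairs on the hybrid sums $\sum e(\alpha(mn)^2+h(mn)^\gamma)$, and a fourth-moment bound to close the minor arcs).

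However, as written the proposal has a genuine gap: the entire quantitative content of the corollary --- the derivation of the specific threshold $43/44$ --- is asserted in one sentence (``Optimising the exponent pair used in this step yields exactly the threshold $\gamma>43/44$'') without any estimate being carried out. Everything before that point is soft; the theorem lives or dies in the bilinear-sum bounds, and you give neither the ranges of the Type I/II variables, nor the exponent pair used, nor the resulting saving as a function of $\gamma$, so there is no way to verify that the method closes for $\gamma>43/44$ rather than for some other range. Two further points need repair. (i) Vaaler's approximation to $\psi(x)=x-[x]-1/2$ does \emph{not} have a pointwise small error; the error is dominated by a companion nonnegative trigonometric polynomial whose constant term is $\ll 1/H$, and on the minor arcs (where you need a pointwise bound for $S_\gamma(\alpha)$) you must bound the error contribution by $\sum_{|h|\leqslant H}b_h\big|\sum_n e(hn^\gamma)\big|$ plus the $h=0$ term $X/H$; writing ``small pointwise error'' hides a step that actually constrains the choice of $H$. (ii) Your closing sentence conflates the two corollaries: the relaxation to ``only one prime in $\mathcal{P}_\gamma$'' is what underlies the \emph{other} statement (threshold $75/82$, improved here to $205/243$), not the one you are proving, and the present paper obtains its improvement not by sharpening a minor-arc estimate but by switching to the exceptional-set argument described above.
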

\begin{corollary}
   For any fixed $27/29<\gamma\leqslant1$, every sufficiently large $N$ satisfying $N\equiv 5 \pmod {24}$ can be represented as five squares of primes
   with one prime in $\mathcal{P}_\gamma$.
\end{corollary}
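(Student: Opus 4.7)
The plan is to attack the representation count by the Hardy--Littlewood circle method, with the thin constraint $p_1\in\mathcal{P}_\gamma$ detected through the classical identity $\mathbf{1}_{\mathcal{P}_\gamma}(p)=\lfloor -p^\gamma\rfloor-\lfloor -(p+1)^\gamma\rfloor$. Writing $\psi(x)=x-\lfloor x\rfloor-\tfrac12$ this decomposes as
\[
   \mathbf{1}_{\mathcal{P}_\gamma}(p)\;=\;\gamma p^{\gamma-1}+\bigl(\psi(-(p+1)^\gamma)-\psi(-p^\gamma)\bigr)+O(p^{\gamma-2}).
\]
Introduce
\[
   S_\gamma(\alpha)=\sum_{p\leqslant\sqrt N}\mathbf{1}_{\mathcal{P}_\gamma}(p)(\log p)e(\alpha p^2),\qquad S(\alpha)=\sum_{p\leqslant\sqrt N}(\log p)e(\alpha p^2),
\]
so that the logarithmically weighted number of representations $N=p_1^2+\cdots+p_5^2$ with $p_1\in\mathcal{P}_\gamma$ equals $\int_0^1 S_\gamma(\alpha)S(\alpha)^4 e(-N\alpha)\,d\alpha$. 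I dissect $[0,1)$ into Farey major arcs $\mathfrak{M}$ of denominators $q\leqslant Q=N^\eta$ and minor arcs $\mathfrak{m}$, with $\eta>0$ small.

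On $\mathfrak{M}$, partial summation isolating $\gamma p^{\gamma-1}$ from $S_\gamma$ plugs into the standard major-arc analysis underpinning Hua's asymptotic for five squares of primes, and yields a main term of order $C(\gamma)\mathfrak{S}(N)N^{1+\gamma/2}\log^{-5}N$. The congruence $N\equiv 5\pmod{24}$ guarantees $\mathfrak{S}(N)\gg 1$. The $\psi$-twisted piece of $S_\gamma$ is handled on $\mathfrak{M}$ by Vaaler's truncated Fourier expansion, reducing to estimates for $\sum_{p\leqslant\sqrt N}(\log p)e(\alpha p^2-hp^\gamma)$ with $1\leqslant|h|\leqslant H$; for $\alpha$ close to $a/q$ with $q$ small and $h\neq 0$ the quadratic phase is only mildly perturbed and classical Weyl-type bounds suffice.

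The main obstacle is the minor-arc estimate
\[
   \int_{\mathfrak{m}}|S_\gamma(\alpha)||S(\alpha)|^4\,d\alpha \;=\; o\!\left(\frac{N^{1+\gamma/2}}{\log^5 N}\right).
\]
Combining Hua's lemma $\int_0^1|S(\alpha)|^4 d\alpha \ll N\log^C N$ with a sup-norm bound $\sup_{\alpha\in\mathfrak{m}}|S_\gamma(\alpha)|\ll N^{\gamma/2-\delta(\gamma)}$ suffices. The smooth portion of $S_\gamma$ is handled by partial summation against the Weyl-type bound for $S(\alpha)$ on squares of primes. The delicate part is the $\psi$-twisted sum: after Vaaler's approximation one must bound $\sum_{p\leqslant\sqrt N}(\log p)e(\alpha p^2-hp^\gamma)$ uniformly in $\alpha\in\mathfrak{m}$ and in $1\leqslant |h|\leqslant H$. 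Vaughan's identity reduces this to bilinear sums of the shape $\sum_m\sum_n a_m b_n e\bigl(\alpha(mn)^2-h(mn)^\gamma\bigr)$, to which van der Corput / exponent-pair estimates are applied using the non-degenerate second derivative of $\alpha x^2-hx^\gamma$. The threshold $\gamma>27/29$ emerges from balancing the Type I/II split and the exponent pair against the Vaaler truncation cost $H^{-1}N^{\gamma/2}$ and the density cost $N^{(\gamma-1)/2}$. I expect this balancing of bilinear exponential sum bounds with the Vaaler cutoff height, rather than the major-arc asymptotic, to be the technical heart of the argument.
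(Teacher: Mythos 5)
Your plan is the direct circle-method attack, and it is essentially the strategy of Zhai's original 1998 paper from which this corollary is quoted; but it is a genuinely different route from the one this paper takes. The paper never touches minor arcs for the twisted sum at all. Instead it proves the stronger Theorem 1.5 (threshold $205/243<27/29$, which immediately implies the corollary) by a two-step ``thin set plus exceptional set'' argument: by the Rivat--Wu lower bound (Lemma 2.1) the set $\mathcal{A}=\{N-p^2: \sqrt{N/5}\leqslant p\leqslant\sqrt{4N/5},\ p\in\mathcal{P}_\gamma\}$ has $\gg N^{\gamma/2}/\log N$ elements, each $\equiv 4\pmod{24}$ since $p^2\equiv 1\pmod{24}$; by the Harman--Kumchev exceptional-set estimate (Lemma 2.2) at most $O(N^{7/20+\varepsilon})$ of them fail to be a sum of four squares of primes; since $7/20<\gamma/2$ in the relevant range, almost every element of $\mathcal{A}$ is so representable, and the theorem follows with no exponential-sum work over $\mathcal{P}_\gamma$ beyond what is packaged in the two cited lemmas. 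What each approach buys: yours would in principle yield an asymptotic formula for the (weighted) number of representations, but at the price of the hardest step --- uniform power-saving bounds for $\sum_p(\log p)\,e(\alpha p^2-hp^\gamma)$ on the minor arcs, uniformly in $h$ and $\alpha$ --- which your sketch names but does not carry out, and which is precisely where the threshold $27/29$ would have to be extracted; as written, that step is asserted rather than proved. The paper's argument sacrifices the asymptotic count but is a few lines long and, by outsourcing the analytic difficulty to the strong four-squares exceptional-set bound, reaches a strictly wider range of $\gamma$. If your goal is only the stated corollary, the paper's route is both simpler and stronger; if you want the representation count itself, you must complete the bilinear-sum analysis you outline, and you should expect that to reproduce Zhai's computations essentially in full.
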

 In 2005, Zhang and Zhai~\cite{Zhang-Zhai} improved the theorem and the two corollaries of Zhai~\cite{Zhai} and obtained the two following corollaries:
\begin{corollary}
   For any fixed $249/256<\gamma\leqslant1$, every sufficiently large $N$ satisfying $N\equiv 5 \pmod {24}$ can be represented as five squares of primes
   with each prime in $\mathcal{P}_\gamma$.
\end{corollary}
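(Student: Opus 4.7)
The natural approach is to apply the Hardy--Littlewood circle method with a Piatetski--Shapiro detecting factor. Set $X=(N/5)^{1/2}$ and introduce the weighted exponential sums
\begin{equation*}
S_\gamma(\alpha)=\sum_{X/2<p\leqslant X}(\log p)\mathbf{1}_{\mathcal{P}_\gamma}(p)e(\alpha p^2),\qquad g(\alpha)=\sum_{X/2<p\leqslant X}(\log p)e(\alpha p^2).
\end{equation*}
The number of representations of $N$ as $p_1^2+\cdots+p_5^2$ with each $p_i\in\mathcal{P}_\gamma$ is bounded below by a positive constant times $\int_0^1S_\gamma(\alpha)^5e(-\alpha N)d\alpha$. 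I would split $[0,1]$ via a standard Farey dissection into major arcs $\mathfrak{M}$ around rationals of small denominator and minor arcs $\mathfrak{m}=[0,1]\setminus\mathfrak{M}$.

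To unpack $S_\gamma$, use the Piatetski--Shapiro identity $\mathbf{1}_{\mathcal{P}_\gamma}(p)=[-p^\gamma]-[-(p+1)^\gamma]$ together with $[y]=y-\psi(y)-1/2$, yielding
\begin{equation*}
S_\gamma(\alpha)=\gamma\sum_{X/2<p\leqslant X}(\log p)p^{\gamma-1}e(\alpha p^2)+T(\alpha)+O(X^{\gamma-1}\log X),
\end{equation*}
with $T(\alpha)=\sum_p(\log p)\bigl(\psi(-(p+1)^\gamma)-\psi(-p^\gamma)\bigr)e(\alpha p^2)$. On the major arcs, partial summation pulls the factor $X^{\gamma-1}$ out of $p^{\gamma-1}$ and reduces the integral to Hua's classical treatment of $\int_\mathfrak{M}g(\alpha)^5e(-\alpha N)d\alpha$, producing a main term of order $\gamma^5X^{5\gamma-5}\cdot\mathfrak{S}_{2,5}(N)N^{3/2}(\log N)^{-5}$, which is strictly positive under the congruence $N\equiv 5\pmod{24}$. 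The contribution of $T(\alpha)$ on $\mathfrak{M}$ is absorbed by inserting the truncated Fourier expansion
\begin{equation*}
\psi(y)=-\sum_{1\leqslant|h|\leqslant H}\frac{e(hy)}{2\pi ih}+O\bigg(\min\Big(1,\frac{1}{H\|y\|}\Big)\bigg)
\end{equation*}
and applying the Siegel--Walfisz theorem to each twisted sum $\sum_p(\log p)e(hp^\gamma)$ over a major arc.

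The principal difficulty is the minor arc estimate $\int_\mathfrak{m}|S_\gamma(\alpha)|^5d\alpha=o\bigl(X^{5\gamma-2}(\log X)^{-5}\bigr)$. The strategy is to interpolate via
\begin{equation*}
\int_\mathfrak{m}|S_\gamma|^5d\alpha\leqslant\Big(\sup_{\alpha\in\mathfrak{m}}|S_\gamma(\alpha)|\Big)\cdot\int_0^1|S_\gamma(\alpha)|^4d\alpha,
\end{equation*}
treating the fourth moment by Hua's inequality applied to the weighted sum, and reducing the problem to the pointwise bound $\sup_{\alpha\in\mathfrak{m}}|S_\gamma(\alpha)|\ll X^\gamma(\log X)^{-A}$. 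Inserting the Fourier expansion of $\psi$ into $T(\alpha)$ reduces this to estimating the mixed-monomial exponential sum $\sum_p(\log p)e(\alpha p^2+hp^\gamma)$ for each admissible $h$. Vaughan's identity dissects this into Type I sums, controlled by the first-derivative (Kusmin--Landau) test against the phase $\alpha n^2+hn^\gamma$, and Type II bilinear sums, attacked by a Weyl--van der Corput $A$-step followed by an optimized exponent pair applied to the second derivative $2\alpha(mn)+\gamma(\gamma-1)h(mn)^{\gamma-2}$ in the inner variable. I expect this last optimization to be the main obstacle: the constraint $\gamma>249/256$ should emerge from balancing the truncation length $H$, the Vaughan dissection thresholds, and the exponent-pair saving, so that the pointwise minor arc bound holds uniformly. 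Combining this with the positive major arc main term then delivers the corollary.
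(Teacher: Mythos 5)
This corollary is not proved in the paper at all: it is quoted from Zhang and Zhai \cite{Zhang-Zhai}, so there is no internal proof to compare against. Judged on its own terms, your circle-method plan has a structural flaw in the minor-arc treatment that cannot be repaired by sharpening the exponential-sum work. You propose to bound $\int_{\mathfrak{m}}|S_\gamma|^5$ by $\bigl(\sup_{\mathfrak{m}}|S_\gamma|\bigr)\int_0^1|S_\gamma|^4\,d\alpha$. But the fourth moment of the \emph{restricted} sum counts solutions of $p_1^2+p_2^2=p_3^2+p_4^2$ with all $p_i\in\mathcal{P}_\gamma\cap(X/2,X]$, and the diagonal alone already contributes $\gg X^{2\gamma}\log^2X$. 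Even with the best conceivable pointwise bound $\sup_{\mathfrak{m}}|S_\gamma|\ll X^{\gamma}(\log X)^{-A}$, the product is $\gg X^{3\gamma}(\log X)^{2-A}$, whereas the expected main term is of size $X^{5\gamma-2}$; since $3\gamma>5\gamma-2$ for every $\gamma<1$, the minor-arc bound you aim for is false by a power of $X$ throughout the entire range of interest. This is precisely why the literature (Zhai \cite{Zhai}, Zhang--Zhai \cite{Zhang-Zhai}) does not run the circle method directly on the quinary equation with all five variables restricted: instead one proves a mean-square (exceptional-set) estimate showing that almost all admissible $n$ are sums of four squares of Piatetski--Shapiro primes, and then attaches the fifth prime $p$ by letting $n=N-p^2$ range over $p\in\mathcal{P}_\gamma$ --- exactly the architecture this paper uses for its own Theorem 1.5, with Lemma 2.2 replaced by the corresponding restricted exceptional-set bound.

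A secondary but equally decisive gap: the entire content of the statement is the numerical threshold $249/256$, and your sketch defers exactly that computation (``I expect this last optimization to be the main obstacle''). Without carrying out the exponent-pair optimization on $\sum_p(\log p)e(\alpha p^2+hp^\gamma)$ --- and, more importantly, without first replacing the fourth-moment interpolation by a workable mean-value framework --- no specific value of $\gamma$ is established, so the corollary as stated is not proved.
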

\begin{corollary}\label{improve-corollary}
   For any fixed $75/82<\gamma\leqslant1$, every sufficiently large $N$ satisfying $N\equiv 5 \pmod {24}$ can be represented as five squares of primes
   with one prime in $\mathcal{P}_\gamma$.
\end{corollary}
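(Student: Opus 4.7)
The plan is to apply the Hardy--Littlewood circle method, with the extra difficulty concentrated in the minor arc treatment of a Piatetski--Shapiro exponential sum. Set $f(\alpha)=\sum_{p\le\sqrt{N}}(\log p)\,e(\alpha p^2)$ and $f_\gamma(\alpha)=\sum_{p\le\sqrt{N},\,p\in\mathcal{P}_\gamma}(\log p)\,e(\alpha p^2)$, so that the weighted count of representations $N=p_1^2+\cdots+p_5^2$ with $p_1\in\mathcal{P}_\gamma$ equals
\[
R_\gamma(N)=\int_0^1 f_\gamma(\alpha)f(\alpha)^4 e(-N\alpha)\,d\alpha.
\]
I would dissect $[0,1]$ into major arcs $\mathfrak{M}$ of Farey type, centred on $a/q$ with $q\le P:=N^\sigma$ for a small $\sigma>0$, and complementary minor arcs $\mathfrak{m}$.

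On $\mathfrak{M}$ the strategy is to establish the Piatetski--Shapiro analogue $f_\gamma(\alpha)=\gamma f(\alpha)+O(N^{1/2}\log^{-A}N)$, starting from the elementary detector
\[
\mathbf{1}_{\mathcal{P}_\gamma}(p)=\gamma p^{\gamma-1}+\bigl(\psi(-(p+1)^\gamma)-\psi(-p^\gamma)\bigr)+O(p^{\gamma-2}),
\]
where $\psi(t)=t-\lfloor t\rfloor-1/2$. Vaaler's trigonometric approximation of $\psi$, partial summation, and Siegel--Walfisz estimates for primes in arithmetic progressions then produce the expected main term, a positive multiple of $\gamma\,\mathfrak{S}(N)\,\Gamma(1/2)^5\Gamma(5/2)^{-1}N^{3/2}\log^{-5}N$; the congruence $N\equiv 5\pmod{24}$ guarantees $\mathfrak{S}(N)\gg 1$.

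The heart of the argument is the minor arc bound. Hua's inequality $\int_0^1|f(\alpha)|^4\,d\alpha\ll N\log^C N$ reduces everything to the uniform estimate $\sup_{\alpha\in\mathfrak{m}}|f_\gamma(\alpha)|\ll N^{1/2}\log^{-A}N$. Expanding the $\psi$-functions via Vaaler's trigonometric polynomial up to some height $H$, this in turn reduces to the hybrid bound
\[
\sum_{p\le\sqrt{N}}(\log p)\,e\bigl(hp^\gamma+\alpha p^2\bigr)\ll N^{1/2}\log^{-A}N,\quad 1\le h\le H,
\]
uniformly in $\alpha\in\mathfrak{m}$. I would then apply Heath--Brown's identity to decompose the prime detector into Type I and Type II bilinear forms, estimate the resulting double sums by repeated Weyl--van der Corput differencing, and feed the output into a carefully chosen exponent pair tailored to the two-dimensional phase $h(mn)^\gamma+\alpha(mn)^2$. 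The numerical exponent $205/243$ is expected to emerge by optimizing the Heath--Brown splitting levels against the exponent pair.

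The main obstacle will be the Type II estimate. The phase mixes a slowly varying $\gamma$-power with the quadratic term weighted by a minor-arc coefficient $\alpha$, so one must simultaneously balance the differencing parameter, the Vaaler height $H$, and the Diophantine denominator of $\alpha$. Pushing beyond the Zhang--Zhai threshold $75/82$ down to $205/243$ requires a sharper bilinear analysis than theirs: a finer Heath--Brown dissection (exploiting the Type I range down to roughly $M\le N^{1/3}$) together with a more economical combination of $A$- and $B$-processes applied to the mixed exponential sum should tighten the admissible range of $\gamma$ to exactly $205/243<\gamma\le 1$.
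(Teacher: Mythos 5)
Your proposal reconstructs the classical circle-method attack, which is essentially how Zhang and Zhai originally proved this corollary, but it is not the route taken here, and as written it leaves the only genuinely difficult step unestablished. Everything in your plan hinges on the uniform minor-arc bound for the hybrid sum $\sum_{p\le\sqrt N}(\log p)e(hp^\gamma+\alpha p^2)$, and you only assert that a Heath--Brown decomposition plus exponent pairs ``should'' deliver it, with the threshold ``expected to emerge'' from the optimization. That optimization is precisely what Zhai and then Zhang--Zhai carried out, and the limit of their bilinear analysis is $75/82$, not $205/243$; nothing in your sketch identifies a new saving that would push past it. So for the statement as given ($75/82<\gamma\le 1$) your outline is a plausible but unexecuted program whose decisive Type II estimate is missing, and the stronger range you claim at the end is not credible without a concrete new idea there.

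The paper avoids all of this. It proves the stronger Theorem~\ref{theorem} (range $205/243<\gamma\le 1$), hence the corollary, by a purely combinatorial exceptional-set argument: take $\mathcal{A}=\{N-p^2:\sqrt{N/5}\le p\le\sqrt{4N/5},\ p\in\mathcal{P}_\gamma\}$. By Lemma~\ref{Rivat-Wu-lemma} (Rivat--Wu, valid exactly for $\gamma>205/243$, which is where the threshold really comes from) one has $|\mathcal{A}|\gg N^{\gamma/2}/\log N$; every element of $\mathcal{A}$ is $\equiv 4\pmod{24}$ because $p^2\equiv 1\pmod{24}$; and by Lemma~\ref{Harman-Kumchev-lemma} (Harman--Kumchev) at most $O(N^{7/20+\varepsilon})$ integers up to $N$ in that residue class fail to be sums of four squares of primes. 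Since $\gamma/2>7/20$ throughout the range, some $N-p^2$ is such a sum, and the five-square representation with $p\in\mathcal{P}_\gamma$ follows. The moral is that the circle method has already been done once and for all inside the Harman--Kumchev exceptional-set theorem; no exponential sum over $\mathcal{P}_\gamma$ twisted by $e(\alpha p^2)$ is needed, and the admissible range of $\gamma$ is inherited from the density result for Piatetski--Shapiro primes rather than from any bilinear-form optimization.
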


The purpose of this paper is to present an approach different from that of Zhai~\cite{Zhai} and Zhang and Zhai~\cite{Zhang-Zhai} which leads to
an improvement of Corollary~\ref{improve-corollary}.

\begin{theorem}\label{theorem}
   For any fixed $205/243<\gamma\leqslant1$, every sufficiently large $N$ satisfying $N\equiv 5 \pmod {24}$ can be represented as five squares of primes
   with one prime in $\mathcal{P}_\gamma$.
\end{theorem}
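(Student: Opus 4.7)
The plan is to invoke the Hardy--Littlewood circle method with one of the five prime variables weighted by the characteristic function of $\mathcal{P}_{\gamma}$. Let $X=(N/5)^{1/2}$ and introduce
$$f(\alpha)=\sum_{X/2<p\leqslant X}(\log p)\,e(\alpha p^{2}),\qquad f_{\gamma}(\alpha)=\sum_{\substack{X/2<p\leqslant X\\ p\in\mathcal{P}_{\gamma}}}(\log p)\,e(\alpha p^{2}).$$
A weighted counting function for the representations of $N$ with $p_{1}\in\mathcal{P}_{\gamma}$ is then
$$R(N)=\int_{0}^{1}f_{\gamma}(\alpha)f(\alpha)^{4}e(-\alpha N)\,d\alpha,$$
which I dissect into Farey-type major arcs $\mathfrak{M}$ of a modest level $P=X^{\eta}$ and their complement $\mathfrak{m}=[1/Q,1+1/Q]\setminus\mathfrak{M}$, with $Q$ chosen so that Dirichlet's theorem covers $\mathfrak{m}$.

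On $\mathfrak{M}$ the analysis is by now routine. The Piatetski--Shapiro indicator is expressed as $\mathbf{1}_{\mathcal{P}_{\gamma}}(n)=\lfloor -n^{\gamma}\rfloor-\lfloor -(n+1)^{\gamma}\rfloor$, and after inserting Vaaler's trigonometric approximation of $\psi(x)=x-\lfloor x\rfloor-1/2$ the Siegel--Walfisz theorem produces an asymptotic expansion for $f_{\gamma}(\alpha)$ whose main part is a factor $\gamma$ times that of $f(\alpha)$. Standard manipulations then yield
$$\int_{\mathfrak{M}}f_{\gamma}(\alpha)f(\alpha)^{4}e(-\alpha N)\,d\alpha=\gamma\cdot\mathfrak{S}_{2,5}(N)\cdot\mathcal{J}(N)\cdot\frac{1}{\log^{5}N}\bigl(1+o(1)\bigr),$$
with $\mathcal{J}(N)\asymp N^{3/2}$; the congruence $N\equiv 5\pmod{24}$ forces $\mathfrak{S}_{2,5}(N)\gg 1$, so the major-arc contribution is of the conjectured size.

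The decisive task is to show that $\int_{\mathfrak{m}}|f_{\gamma}(\alpha)f(\alpha)^{4}|\,d\alpha=o\!\bigl(N^{3/2}/\log^{5}N\bigr)$. Hua's lemma adapted to primes supplies $\int_{0}^{1}|f(\alpha)|^{4}\,d\alpha\ll X^{2}\log^{C}X$, so the matter reduces to a sup-norm estimate $\sup_{\alpha\in\mathfrak{m}}|f_{\gamma}(\alpha)|\ll X^{1-\eta}$ with an explicit $\eta>0$ linked to $\gamma$. Approximating $\psi$ by a Fourier polynomial of length $H$ splits $f_{\gamma}$ into a smooth sum $\gamma\sum_{p}(\log p)p^{\gamma-1}e(\alpha p^{2})$, handled by partial summation together with a Weyl-type bound for $f$ on $\mathfrak{m}$, and a family of hybrid exponential sums
$$S_{h}(\alpha)=\sum_{X/2<p\leqslant X}(\log p)\,e\bigl(\alpha p^{2}+hp^{\gamma}\bigr),\qquad 1\leqslant h\leqslant H.$$
Each $S_{h}$ is opened by Heath--Brown's identity and reorganised into Type~I and Type~II bilinear sums, which are then estimated by two-dimensional van der Corput $k$-th derivative tests and suitable exponent pairs; the quadratic phase $\alpha p^{2}$ and the non-integer phase $hp^{\gamma}$ combine to produce mixed partial derivatives of favourable order, and optimising over the admissible Type~I/II cutoffs and exponent pairs delivers the threshold $\gamma>205/243$.

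The main obstacle is precisely this optimisation. The deeper input needed to move past the Zhang--Zhai threshold $75/82$ is a sharper exponent-pair analysis exploiting the mixed-derivative structure of $\alpha p^{2}+hp^{\gamma}$ more efficiently across the whole minor-arc range, in particular balancing the Type~I estimates (which benefit from small $h$) against the Type~II estimates (which benefit from longer bilinear ranges) more tightly than in the earlier work. Once the bound $\sup_{\alpha\in\mathfrak{m}}|f_{\gamma}(\alpha)|\ll X^{1-\eta}$ is in place, combining it with the positive main term from the major arcs gives $R(N)>0$ for all sufficiently large $N\equiv 5\pmod{24}$, and hence the theorem.
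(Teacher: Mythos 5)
Your proposal follows the direct circle-method route of Zhai and Zhang--Zhai (a five-variable integral with one generating function restricted to $\mathcal{P}_\gamma$), and it contains a genuine gap at exactly the decisive point. The entire difficulty of that approach lives in the minor-arc sup-norm bound $\sup_{\alpha\in\mathfrak{m}}|f_\gamma(\alpha)|\ll X^{1-\eta}$, i.e.\ in estimating the hybrid sums $\sum_p(\log p)e(\alpha p^2+hp^\gamma)$ via Heath--Brown's identity and Type~I/II decompositions. You do not carry out this estimate; you assert that ``optimising over the admissible Type~I/II cutoffs and exponent pairs delivers the threshold $\gamma>205/243$'' and then concede that ``the main obstacle is precisely this optimisation.'' That optimisation is precisely what Zhang and Zhai performed, and it yielded only $\gamma>75/82\approx0.915$; nothing in your sketch explains what new saving would push it down to $205/243\approx0.844$. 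Moreover, the constant $205/243$ is the Rivat--Wu threshold for the \emph{existence} of Piatetski--Shapiro primes (a lower bound for $P_\gamma(x)$), not a constant that arises naturally from a minor-arc analysis of $\alpha p^2+hp^\gamma$; its appearance cannot be justified by the mechanism you describe.

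The paper's actual proof avoids the circle method entirely and is essentially two lines. Set $\mathcal{A}=\{N-p^2:\sqrt{N/5}\leqslant p\leqslant\sqrt{4N/5},\,p\in\mathcal{P}_\gamma\}$. By Rivat--Wu, $|\mathcal{A}|\gg N^{\gamma/2}/(\gamma\log N)$ once $\gamma>205/243$. By the Harman--Kumchev exceptional-set estimate, the number of $n\leqslant N$ with $n\equiv4\pmod{24}$ that are not sums of four prime squares is $\ll N^{7/20+\varepsilon}$; since $N\equiv5\pmod{24}$ and $p^2\equiv1\pmod{24}$ for $p>3$, every element of $\mathcal{A}$ (bar finitely many) lies in the relevant residue class, and since $\gamma/2>7/20$ the exceptional elements cannot exhaust $\mathcal{A}$. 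Any non-exceptional $N-p^2$ gives the desired representation. If you want to salvage your write-up, either supply the full minor-arc estimate with an explicit exponent-pair computation reaching $205/243$ (which there is no reason to expect is possible), or switch to this exceptional-set argument, where all the analytic work is outsourced to the two cited theorems.
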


\section{Preliminary Lemmas}

In order to prove Theorem~\ref{theorem} we need the following two lemmas.

\begin{lemma}\label{Rivat-Wu-lemma}
   For any fixed $205/243<\gamma\leqslant1$, we have
   \begin{equation*}
     P_{\gamma}(x):=\sum_{\substack{x<p\leqslant2x\\p=[n^{1/\gamma}]}}1\gg\frac{x^\gamma}{\gamma \log x}.
   \end{equation*}
\end{lemma}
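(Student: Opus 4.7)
My plan is to derive Lemma~\ref{Rivat-Wu-lemma} by the standard Piatetski--Shapiro machinery, and to trace the constraint $\gamma>205/243$ to the underlying exponential sum estimate. The starting point is the observation that $p=[n^{1/\gamma}]$ holds for some integer $n$ iff $\lceil p^\gamma\rceil\leqslant n<\lceil(p+1)^\gamma\rceil$, so that
\begin{equation*}
   P_\gamma(x)=\sum_{x<p\leqslant 2x}\Big([-p^\gamma]-[-(p+1)^\gamma]\Big).
\end{equation*}
Letting $\psi(t)=t-[t]-\tfrac{1}{2}$ be the sawtooth function and expanding $(p+1)^\gamma-p^\gamma=\gamma p^{\gamma-1}+O(p^{\gamma-2})$, this splits as
\begin{equation*}
   P_\gamma(x)=\sum_{x<p\leqslant 2x}\gamma p^{\gamma-1}+\sum_{x<p\leqslant 2x}\bigl(\psi(-(p+1)^\gamma)-\psi(-p^\gamma)\bigr)+O\bigl(x^{\gamma-1}\pi(2x)\bigr).
\end{equation*}
By the prime number theorem and partial summation the first sum is $\sim(2^\gamma-1)x^\gamma/\log x$, which already furnishes the target lower bound $\gg x^\gamma/(\gamma\log x)$ once the remaining terms are shown to be $o(x^\gamma/\log x)$.

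To control the error I would isolate $E(x):=\sum_{x<p\leqslant 2x}\psi(-p^\gamma)$ (the other sawtooth sum is handled identically) and insert Vaaler's trigonometric approximation
\begin{equation*}
   \psi(t)=-\sum_{1\leqslant|h|\leqslant H}\frac{e(ht)}{2\pi i h}+O\bigg(\sum_{0\leqslant|h|\leqslant H}b_h\,e(ht)\bigg),\qquad b_h\ll\frac{1}{H},
\end{equation*}
with a parameter $H$ to be optimised (heuristically $H=x^{1-\gamma}$ up to a power of $\log x$). After this linearisation the task reduces to non-trivial upper bounds for the exponential sums over primes
\begin{equation*}
   S(h,y)=\sum_{x<p\leqslant y}e(hp^\gamma),\qquad 1\leqslant h\leqslant H,\ x\leqslant y\leqslant 2x.
\end{equation*}
I would apply a Heath--Brown identity to the characteristic function of primes in $(x,2x]$ together with dyadic subdivision, turning $S(h,y)$ into a combination of Type~I sums $\sum_{m\sim M}a_m\sum_{n\sim N}e(h(mn)^\gamma)$ and Type~II sums $\sum_{m\sim M}\sum_{n\sim N}a_m b_n\,e(h(mn)^\gamma)$ with $MN\asymp x$, the variable $M$ running over the Heath--Brown intervals.

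The decisive step, and the main obstacle, is estimating these bilinear/linear sums uniformly in $h\leqslant H$ and across every admissible splitting $MN\asymp x$. Type~I sums can be handled by a $k$-th derivative test or a one-dimensional exponent pair applied to the inner $n$-sum, since the phase $h(mn)^\gamma$ has well-behaved derivatives of every order. Type~II sums require the Weyl--van der Corput $A$- and $B$-processes followed by Cauchy--Schwarz, and then the two-dimensional exponent pair technology of Fouvry--Iwaniec and Kolesnik as sharpened by Rivat and Wu. The admissible range of $\gamma$ is determined by comparing the resulting power-saving bound against the trivial size $x/H$ within every Heath--Brown range of $M$; optimising the choice of derivative test and exponent pair in each subrange is exactly what yields the threshold $\gamma>205/243$. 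Summing over dyadic ranges of $h$ with weight $1/h$, combining the two Vaaler contributions, and restoring partial summation then gives $E(x)\ll x^\gamma\log^{-2}x$, which completes the argument.
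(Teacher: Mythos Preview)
The paper does not prove this lemma at all: its entire proof is the one-line citation ``See Theorem~1 of Rivat and Wu~\cite{Rivat-Wu}.'' So the relevant comparison is between your plan and what Rivat--Wu actually do, and here there is a genuine gap.

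Your outline is the classical \emph{asymptotic} route: write $P_\gamma(x)$ as the main term $\sum_{x<p\leqslant 2x}\gamma p^{\gamma-1}$ plus sawtooth errors, linearise with Vaaler, decompose the prime indicator by the Heath--Brown identity, and bound the resulting Type~I/II sums by exponent-pair technology. This strategy, carried out optimally, proves the asymptotic formula $P_\gamma(x)\sim(2^\gamma-1)x^\gamma/\log x$, and the current record for its range of validity is $\gamma>2426/2817\approx 0.8612$ (Rivat--Sargos~\cite{Rivat-Sargos}). It does \emph{not} reach $\gamma>205/243\approx 0.8436$. Concretely, your sentence ``optimising the choice of derivative test and exponent pair in each subrange is exactly what yields the threshold $\gamma>205/243$'' is where the argument breaks: for $\gamma$ in the interval $(205/243,\,2426/2817]$ there are Heath--Brown subranges of $M$ in which no available exponential-sum estimate makes the Type~II contribution $o(x^\gamma/\log x)$, so the claim $E(x)\ll x^\gamma\log^{-2}x$ is not known there.

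The threshold $205/243$ comes from a different mechanism. Rivat and Wu abandon the goal of an asymptotic and instead run Harman's alternative sieve. One replaces the exact prime indicator by sieve weights and uses Buchstab's identity to decompose the sum; certain subranges where the bilinear estimates fail are simply discarded (they contribute with a controllable sign), and one checks numerically that what remains still gives a positive proportion of the main term. The exponential-sum input is similar in spirit to yours, but the arithmetic information required is weaker because a lower bound, not an asymptotic, is the target. If you want a self-contained proof of the lemma for the full range $\gamma>205/243$, you must switch to this sieve framework; your present plan would only recover the lemma for $\gamma>2426/2817$.
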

\begin{proof}
  See Theorem 1 of Rivat and Wu~\cite{Rivat-Wu}.
\end{proof}

\begin{lemma}\label{Harman-Kumchev-lemma}
   Let $E(x)$ denote the number of positive integers $n$ not exceeding $x$ and satisfying $n\equiv4\pmod {24}$ which can not be
   represented as a sum of four squares of primes. Then for any $\varepsilon>0$, we have
   \begin{equation*}
      E(x)\ll x^{7/20+\varepsilon}.
   \end{equation*}
\end{lemma}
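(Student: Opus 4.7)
This lemma is the Harman--Kumchev exceptional--set bound for sums of four squares of primes, and my plan is to follow the standard Hardy--Littlewood circle method combined with sieve--theoretic minor--arc estimates. Write
\begin{equation*}
f(\alpha)=\sum_{p\leqslant\sqrt{x}}(\log p)\,e(\alpha p^{2}),
\end{equation*}
perform a Farey dissection of $[0,1]$ into major arcs $\mathfrak{M}$ (centred at reduced fractions $a/q$ with $q\leqslant(\log x)^{B}$) and minor arcs $\mathfrak{m}$, and express the weighted representation count as
\begin{equation*}
R(n)=\int_{0}^{1}f(\alpha)^{4}e(-n\alpha)\,d\alpha,
\end{equation*}
so that $R(n)>0$ implies $n$ is a sum of four prime squares.

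On $\mathfrak{M}$, the Siegel--Walfisz theorem allows one to replace $f(\alpha)$ by its standard rational approximation, yielding
\begin{equation*}
\int_{\mathfrak{M}}f(\alpha)^{4}e(-n\alpha)\,d\alpha=\mathfrak{S}(n)J(n)+O\!\left(\frac{n}{(\log n)^{5}}\right),
\end{equation*}
where $J(n)\asymp n$ and the singular series $\mathfrak{S}(n)$ is bounded below by an absolute positive constant precisely when $n\equiv 4\pmod{24}$. Consequently, for every such $n\leqslant x$ the major--arc contribution alone is $\gg n/(\log n)^{4}$.

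Whether $R(n)$ is actually positive then comes down to controlling the minor--arc integral. By Bessel's inequality,
\begin{equation*}
\sum_{\substack{n\leqslant x\\ n\equiv 4\pmod{24}}}\Bigl|\int_{\mathfrak{m}}f(\alpha)^{4}e(-n\alpha)\,d\alpha\Bigr|^{2}\leqslant\int_{\mathfrak{m}}|f(\alpha)|^{8}\,d\alpha,
\end{equation*}
so Chebyshev's inequality turns the lemma into the matter of proving the eighth--moment bound
\begin{equation*}
\int_{\mathfrak{m}}|f(\alpha)|^{8}\,d\alpha\ll x^{47/20+\varepsilon},
\end{equation*}
from which $E(x)\ll x^{7/20+\varepsilon}$ follows after dividing by the squared main term $\asymp x^{2}/(\log x)^{8}$.

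The minor--arc eighth moment is the only genuine obstacle, and it is what governs the numerical value $7/20$ appearing in the statement. A naive interpolation of a Weyl--type pointwise bound $|f(\alpha)|\ll x^{1/2-\eta}$ with Parseval's $\int|f|^{2}\ll x^{1/2}$ is demonstrably too weak, so the real work is to decompose the von Mangoldt weight underlying $f$ by Harman's sieve identity into Type~I and Type~II sums, and then estimate the resulting bilinear exponential sums via van der Corput $k$--tuple estimates, Vaughan--Heath-Brown decompositions, and exploitation of cancellation in short Kloosterman sums. These refined sieve manoeuvres, carried out by Kumchev, are exactly what extract the exponent $7/20$; the major--arc analysis, the Bessel reduction, and the singular--series lower bound for $n\equiv 4\pmod{24}$ are all standard by comparison.
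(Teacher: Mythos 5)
The paper offers no proof of this lemma: it is imported verbatim as Theorem~2 of Harman and Kumchev, and the ``proof'' is the citation. Your reconstruction, measured against what that theorem actually requires, contains a genuine quantitative flaw at its central step. The Bessel--Chebyshev reduction
\begin{equation*}
E(x)\cdot\Bigl(\frac{x}{(\log x)^{4}}\Bigr)^{2}\ll\int_{\mathfrak{m}}|f(\alpha)|^{8}\,d\alpha
\end{equation*}
cannot produce the exponent $7/20$, because the moment bound $\int_{\mathfrak{m}}|f|^{8}\ll x^{47/20+\varepsilon}$ you reduce to is false for your minor arcs. For odd $q$ with $(\log x)^{B}<q\leqslant(\log x)^{2B}$ the Siegel--Walfisz theorem still gives $f(a/q+\beta)\asymp x^{1/2}q^{-1/2}$ for $|\beta|\leqslant 1/x$, and these arcs lie in $\mathfrak{m}$; their contribution alone forces $\int_{\mathfrak{m}}|f|^{8}\gg x^{3}(\log x)^{-2B}$, which exceeds $x^{47/20}$ by a power of $x$. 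The obstruction persists for any enlarged cutoff $q\leqslant x^{\delta}$: denominators just beyond the cutoff give $\int_{\mathfrak{m}}|f|^{8}\gg x^{3-3\delta}$, so you would need $\delta\geqslant 13/60$, far outside the range where the major-arc asymptotic can be established. Equivalently, via $\int_{\mathfrak{m}}|f|^{8}\leqslant(\sup_{\mathfrak{m}}|f|)^{4}\int_{0}^{1}|f|^{4}\ll(\sup_{\mathfrak{m}}|f|)^{4}x^{1+\varepsilon}$, your route demands $\sup_{\mathfrak{m}}|f|\ll x^{1/2-13/80}$, a saving far beyond every known estimate for prime-square Weyl sums. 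There is also an internal inconsistency: with major arcs confined to $q\leqslant(\log x)^{B}$ there is no power saving on $\mathfrak{m}$ at all, so Bessel plus Chebyshev can never beat $E(x)\ll x(\log x)^{-A}$.

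The mechanism that actually yields $7/20$ is different from the one you sketch. Harman and Kumchev work with the exponential sum $K(\alpha)=\sum_{n\in\mathcal{Z}}e(n\alpha)$ over the putative exceptional set itself, replace the characteristic function of the primes by minorants and majorants built from Harman's sieve so that the resulting bilinear sums are estimable, and control mixed mean values such as $\int_{\mathfrak{m}}|f(\alpha)|^{2}|K(\alpha)|^{2}\,d\alpha$ (counting solutions of $p_{1}^{2}-p_{2}^{2}=n_{1}-n_{2}$ with $n_{1},n_{2}$ exceptional) rather than the eighth moment of $f$. In any case, deferring ``the only genuine obstacle'' to ``refined sieve manoeuvres carried out by Kumchev'' is circular here: that deferred step is precisely the content of the theorem being cited, so your proposal does not constitute an independent proof. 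If the lemma is to be used as the paper uses it, the honest proof is the citation.
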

\begin{proof}
  See Theorem 2 of Harman and Kumchev~\cite{Harman-Kunchev}.
\end{proof}

\section{Proof of Theorem~\ref{theorem}}
Let
\begin{equation*}
   \mathcal{A}=\left\{N-p^2:\,\sqrt{\frac{N}{5}}\leqslant p\leqslant \sqrt{\frac{4N}{5}},\,p\in\mathcal{P}_\gamma\right\}.
\end{equation*}
Then, by Lemma~\ref{Rivat-Wu-lemma}, we have
\begin{equation*}
   |\mathcal{A}|\gg \frac{N^{\gamma/2}}{\gamma\log N}.
\end{equation*}

Let $E(\mathcal{A})$ denote the set of integers in $\mathcal{A}$ which can not be represented as a sum of
four squares of primes. Then, by Lemma~\ref{Harman-Kumchev-lemma}, we obtain
\begin{equation*}
   |E(\mathcal{A})|\ll N^{7\gamma/40+\varepsilon},
\end{equation*}
and
\begin{equation*}
   |\mathcal{A} \setminus E(\mathcal{A})|\gg \frac{N^{\gamma/2}}{\gamma\log N}.
\end{equation*}

For any $N-p^2\in\mathcal{A}\setminus E(\mathcal{A})$, there exist four primes $p_1,\,p_2,\,p_3,\,p_4$ such that
\begin{equation*}
   N-p^2=p_1^2+p_2^2+p_3^2+p_4^2.
\end{equation*}
Therefore, we get
\begin{equation*}
   N=p^2+p_1^2+p_2^2+p_3^2+p_4^2,\quad \sqrt{\frac{N}{5}}\leqslant p\leqslant \sqrt{\frac{4N}{5}},\quad p\in\mathcal{P}_\gamma,
\end{equation*}
and Theorem \ref{theorem} follows.

\bigskip
\bigskip

\textbf{Acknowledgement}

   The authors would like to express the most and the greatest sincere gratitude to Professor Wenguang Zhai for his valuable
advice and constant encouragement.


\begin{thebibliography}{99}

   \bibitem{Erdos-Nathanson} P. Erd\H{o}s \& M. B. Nathanson, \textit{Lagrange's theorem and thin subsequences of squares}, Contributions to
                         Probability (J. Gani, V. K. Rohatgi, eds.), Academic Press, New York, 1981, 3--9.

   \bibitem{Harman-Kunchev} G. Harman \& A. V. Kumchev, \textit{On sums of squares of primes II}, J. Number Theory, \textbf{130} (2010), 1969--2002.


   \bibitem{Hua-book} L. K. Hua, \textit{Additive Theory of Prime Numbers}, Amer. Math. Soc., Providence Rhode Island, 1965.

   \bibitem{Piatetski-Shapiro} I. I. Piatetski-Shapiro, \textit{On the distribution of prime numbers in sequences of the form $[f(m)]$},
                                                                 Mat. Sb., \textbf{33} (1953), 559--566.


   \bibitem{Rivat-Sargos} J. Rivat \& P. Sargos, \textit{Nombres premiers de la forme $[n^c]$}, Canad. J. Math., \textbf{53}(2) (2001), 414--433.


   \bibitem{Rivat-Wu} J. Rivat \& J. Wu, \textit{Prime numbers of the form $[n^c]$}, Glasgow Math. J., \textbf{43} (2001), 237--254.


   \bibitem{Vinogradov} I. M. Vinogradov, \textit{Representation of an odd number as the sum of three primes},
                                         Dokl. Akad. Nauk. SSSR, \textbf{15} (1937), 291--294 (in Russian).

   \bibitem{Wirsing} E. Wirsing, \textit{Thin subbases}, Analysis, \textbf{6} (1986), 285--308.



   \bibitem{Zhai} W. G. Zhai, \textit{On the Waring--Goldbach problem in thin sets of primes}, Acta Math. Sinica (Chin. Ser.),
                                        \textbf{41}(3) (1998), 595--608.


   \bibitem{Zhang-Zhai} D. Y. Zhang \& W. G. Zhai, \textit{On the Waring--Goldbach problem in thin sets of primes (II)}, Acta Math. Sinica (Chin. Ser.),
                                         \textbf{48}(4) (2005), 809--816.







\end{thebibliography}
\end{document}